 \let\mathscr\relax% just so we can load this and rsfs
\newtheorem{nummer}{ }[section]
\newtheorem*{mainthm}{\sc Main Theorem}
\newtheorem{prp}[nummer]{\sc Proposition}
\newtheorem{lem}[nummer]{\sc Lemma}
\newtheorem{fct}[nummer]{\sc Fact}
\newtheorem{defi}[nummer]{\sc Definition}
\newenvironment{claim}[1]{\par\noindent\textsc{Claim.}\space#1}{}
\newcounter{faelle} % (1),(2),... 
\renewcommand{\thefaelle}{\rm(\alph{faelle})}
\renewcommand\qed{\relax\ifmmode~\hfill$\dashv$\else\unskip\nobreak~\hfill$\dashv$\fi}
\def\epsilon{\varepsilon}
\newcommand\func{{}^\omega\omega}
\newcommand\cov{\mathit{cov}}
\renewcommand{\phi}{\varphi}
\renewcommand{\theta}{\vartheta}
\begin{document}

\begin{center}
{\Large\sc A unique $Q$-point and infinitely many near-coherence classes of ultrafilters}\\[1.8ex]

{\small Lorenz Halbeisen}\\[1.2ex] 
{\scriptsize Department of Mathematics, ETH Z\"urich, 
8092 Z\"urich, Switzerland\\ 
lorenz.halbeisen@math.ethz.ch}\\[1.8ex]

{\small Silvan Horvath}\\[1.2ex] 
{\scriptsize Department of Mathematics, ETH Z\"urich, 
8092 Z\"urich, Switzerland\\ 
silvan.horvath@math.ethz.ch}\\[1.8ex]

{\small Saharon Shelah
\footnote{
Research partially supported by the {\it Israel 
Science Foundation\/} grant no.\;{2320/23}.
This is paper\;{1265} on the author's publication list.
}}\\[1.2ex]
{\scriptsize Einstein Institute of Mathematics,
The Hebrew University of Jerusalem,
9190401 Jerusalem, Israel\\
shelah@math.huji.ac.il}\\[1.2ex]
{\scriptsize\sl and}\\[1ex]
{\scriptsize Department of Mathematics,
Hill Center\,--\,Busch Campus, Rutgers, 
State University of New Jersey\\
110 Frelinghuysen Rd.,
Piscataway, NJ 08854-8019, U.S.A.
}
\end{center}

%%%%%%%%%%%%%%%%%%%%%%%%%%%%%%%%%%%%%%%%%%%%%%%%%%%%%%%%%%%%%%%%%%%%%%%%%%

%==========================
\begin{quote}
{\small {\bf Abstract.} 
We show that in the model obtained by iteratively pseudo-intersecting a Ramsey ultrafilter via a length-$\omega_2$ countable support iteration of restricted Mathias forcing over a ground model satisfying $\textsf{CH}$, there is a unique $Q$-point up to isomorphism. In particular, it is consistent that there is only one $Q$-point while there are $2^{\mathfrak{c}}$-many near-coherence classes of ultrafilters.}
\end{quote}

\begin{quote}
\small{{\bf key-words\/}: $Q$-point,  Ramsey ultrafilter, Mathias forcing}\\
\small{\bf 2010 Mathematics Subject Classification\/}: {\bf 03E35}\ {03E17}
% 03E35 Consistency and independence results
% 03E17 Cardinal characteristics of the continuum
\end{quote}

%%%%%%%%%%%%%%%%%%%%%%%%%%%%%%%%%%%%%%%%%%%%%%%%%%%%%

\setcounter{section}{-1}
\section{Introduction}

Throughout this paper, read \textit{ultrafilter} as \textit{non-principal ultrafilter on $\omega$}. For $x \subseteq \omega$, we denote by $[x]^\omega$ the set of infinite subsets of $x$ and by $[x]^{<\omega}$ the set of finite subsets of $x$.

Recall that an ultrafilter $E$ is a \textit{$Q$-point} if and only if for every interval partition $\{[k_i, k_{i+1}): i \in \omega\}$ of $\omega$, there exists some $x \in E$ such that $\forall i \in \omega: |x \cap [k_i, k_{i+1})| \leq 1$. Furthermore, an ultrafilter $\mathcal{U}$ is a \textit{Ramsey ultrafilter} if and only if the \textit{Maiden} has no winning strategy in the \textit{ultrafilter game for $\mathcal{U}$}, played between the Maiden and \textit{Death}:

\begin{defi}
Let $\mathcal{U}$ be an ultrafilter. The ultrafilter game for $\mathcal{U}$ proceeds as follows:

The Maiden opens the game and plays some $y_0 \in \mathcal{U}$. Death responds by playing some $n_0 \in y_0$. In the $(k+1)$-th move, the Maiden having played $y_0 \supseteq y_1 \supseteq ... \supseteq y_k$, and Death having played $n_0 < n_1 < ... < n_k$, the Maiden plays some $y_{k+1}\in [y_k]^\omega \cap \mathcal{U}$, and Death responds by playing some $n_{k+1} \in y_{k+1}$, $n_{k+1}>n_k$.

Death wins if and only if $\{n_i: i \in \omega\}\in \mathcal{U}$.
\end{defi}

It is well-known that every Ramsey ultrafilter is a $Q$-point. Canjar~\cite{canjar1990generic} showed that the existence of $2^{\mathfrak{c}}$-many Ramsey ultrafilters follows from the assumption $\cov(\mathcal{M})=\mathfrak{c}$. The weaker assumption $\cov(\mathcal{M})=\mathfrak{d}$ implies the existence of $2^{\mathfrak{c}}$ $Q$-points, as was shown by Mill{\'a}n~\cite{millan2007note}. It is well-known that in the Mathias model -- the model obtained by a length-$\omega_2$ countable support iteration of unrestricted Mathias forcing over a ground model satisfying \textsf{CH} -- there are no $Q$-points (see~\cite[Proposition 26.23]{cst}). In fact, the Mathias model contains no \textit{rapid} ultrafilters, where an ultrafilter $E$ is rapid if for every $f\in \func$ there exists some $x\in E$ such that $\forall n \in \omega: |x \cap f(n)|\leq n$ (note that every $Q$-point is rapid). It follows that both the Mathias model and the model considered in this paper satisfy $\cov(\mathcal{M})=\omega_1 < \mathfrak{d}=\mathfrak{c}=\omega_2$. 

In contrast to the Mathias model, our model contains $2^{\mathfrak{c}}$-many rapid ultrafilters: It follows from an observation of Mill{\'a}n~\cite[page 222]{millan2007note} that the existence of a single rapid ultrafilter $E$ implies the existence of $2^{\mathfrak{c}}$ of them, by considering the products $\mathcal{U} \times E$ for different ultrafilters $\mathcal{U}$.\footnote{$\mathcal{U} \times E$ is an ultrafilter on $\omega \times \omega$ defined by $\mathcal{U} \times E = \{x \subseteq \omega \times \omega: \{n \in \omega: \{m \in \omega: \langle n,m\rangle \in x\} \in E\}\in \mathcal{U}\}$.}

While the consistency of the non-existence of $Q$-points is a well-established fact with a variety of witnesses apart from the Mathias model\footnote{such as the Laver and Miller models (see~\cite{miller1980there} and~\cite{blass1989near}, respectively).}, the construction of models containing only `few' $Q$-points seems to have received less attention. However, such models do arise naturally as models containing only few near-coherence classes of ultrafilters\footnote{Two ultrafilters $\mathcal{U}_1$ and $\mathcal{U}_2$ are \textit{nearly-coherent} if there is some finite-to-one $f\in {^\omega}{\omega}$ such that $f(\mathcal{U}_1)=f(\mathcal{U}_2)$, where $f(\mathcal{U}_i):=\{X \subseteq \omega: f^{-1}[X] \in \mathcal{U}_i\}$. Note that two $Q$-points are nearly-coherent if and only if they are isomorphic.}: Indeed, Mildenberger~\cite{mildenberger2024exactly} has constructed models with exactly two and exactly three near-coherence classes, and it is easy to see that these contain exactly one and exactly two $Q$-points, respectively: In her model with exactly two near-coherence classes, one class contains a Ramsey ultrafilter, while the other class contains an ultrafilter that is $\omega_1$-generated. Hence, this latter class cannot contain a $Q$-point, since her models satisfy $\mathfrak{d}=\mathfrak{c}=\omega_2$ and such a $Q$-point would thus have to be ${<}\mathfrak{d}$-generated, which is impossible. Analogously, in  Mildenberger's model with exactly three near-coherence classes, two classes are represented by Ramsey ultrafilters, while the third contains an $\omega_1$-generated ultrafilter -- giving exactly two $Q$-points in total.

The construction of models with exactly $n$ near-coherence classes of ultrafilters for various finite $n \geq 4$ would similarly yield the consistency of exactly $m$ $Q$-points for some $m < n$.\footnote{The inequality is strict since such a model must satisfy $\mathfrak{u}<\mathfrak{d}$, a result due to Banakh and Blass~\cite{banakh2006number}. Hence, one of the $n$ near-coherence classes contains a ${<}\mathfrak{d}$-generated ultrafilter and thus no $Q$-point.}\footnote{See the note on the next page.}

\pagebreak

The model considered in this paper is of a different nature, however: It contains only one $Q$-point while its number of near-coherence classes is $2^{\mathfrak{c}}$, i.e., the model's lack of $Q$-points is not the consequence of a lack of near-coherence classes. This follows from the fact that dominating reals are added at each of the $\omega_2$ stages of the iteration, which gives $\mathfrak{b}=\mathfrak{d}=\mathfrak{c}=\omega_2$ in the final extension. Since $\mathfrak{b} \leq \mathfrak{u}$ (see Solomon~\cite{solomon1977families}), we have $\mathfrak{u}=\mathfrak{d}=\omega_2$ in our model, and hence there are $2^{\mathfrak{c}}$-many near-coherence classes of ultrafilters by Banakh and Blass~\cite{banakh2006number}.

Incidentally, both our model as well as Mildenberger's models answer the two questions posed in a recent paper of Raghavan \cite{raghavan2025}. He asked whether it is consistent that there are $Q$-points, while (1) there is no Tukey maximal $Q$-point, and (2) the $Q$-points are not cofinal in the RK-ordering.\footnote{See, for example, \cite{raghavan2012cofinal} and \cite{dobrinen2011tukey} on the Tukey- and RK-orderings of ultrafilters.} Note that the unique $Q$-point in our model is simultaneously a $P$-point, and these are never Tukey maximal (see \cite[Corollary 19]{dobrinen2011tukey}).

\paragraph{Note added in proof:} 
Based on the technique used below, in the forthcoming paper~\cite{halbeisen2025there},
Halbeisen, Horvath, and \"Ozalp have established the consistency of the
statement ``{\sl There are exactly $n$ $Q$-points up to isomorphism and $\mathfrak{u}=\mathfrak{d}=\mathfrak{c}$''}, for any
$n\in\omega$.

\section{Definitions and Preliminaries}

Before we state and prove our main result in the next section, let
us give some definitions, relations, and related results.

\begin{defi}
Let $\mathcal{U}$ be a Ramsey ultrafilter. Mathias forcing restricted to $\mathcal{U}$, written $\mathbb{M}_{\mathcal{U}}$, consists of conditions $\langle s, x\rangle \in [\omega]^{<\omega}\times \mathcal{U}$ with $\max s < \min x$, ordered by
\[\langle s,x\rangle \leq_{\mathbb{M}_{\mathcal{U}}} \langle t, y\rangle :\iff s \supseteq t \land x \subseteq y \land s \setminus t \subseteq y.\]
\end{defi}
The forcing notion $\mathbb{M}_{\mathcal{U}}$ clearly satisfies the c.c.c. and is therefore proper. We will need the following additional facts.

\begin{fct}[{e.g., see~\cite[Theorem 26.3]{cst}}]
Let $\mathcal{U}$ be a Ramsey ultrafilter. The forcing notion $\mathbb{M}_{\mathcal{U}}$ has the pure decision property, i.e., for any sentence $\phi$ in the forcing language and any $\mathbb{M}_{\mathcal{U}}$-condition $\langle s, x \rangle$, there exists $y \in [x]^{\omega} \cap \mathcal{U}$ such that either $\langle s,y\rangle \Vdash_{\mathbb{M}_{\mathcal{U}}} \phi$ or $\langle s,y\rangle \Vdash_{\mathbb{M}_{\mathcal{U}}} \neg \phi$.
\end{fct}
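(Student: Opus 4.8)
The plan is to prove the formally stronger statement that there is $y\in[x]^{\omega}\cap\mathcal{U}$ such that $\langle s\cup t,\,y\setminus(\max t+1)\rangle$ decides $\phi$ (forces $\phi$ or forces $\neg\phi$) for every finite $t\subseteq y$ with $t=\emptyset$ or $\max s<\min t$, with the convention that this condition is $\langle s,y\rangle$ when $t=\emptyset$; the Fact is the case $t=\emptyset$. Call a condition $\langle r,A\rangle$ an \emph{acceptor} if $\langle r,A\rangle\Vdash_{\mathbb{M}_{\mathcal{U}}}\phi$, and a \emph{rejector} if no pure extension $\langle r,A'\rangle$ (same stem, $A'\in[A]^{\omega}\cap\mathcal{U}$) is an acceptor. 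Being an acceptor is inherited by every extension, being a rejector by every pure extension, no condition is both, and --- trivially --- every condition has a pure extension that is an acceptor or a rejector.

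The key lemma, and the step I expect to be the main obstacle, is: if $\langle r,A\rangle$ is a rejector then $D_r:=\{m\in A:\langle r\cup\{m\},\,A\setminus(m+1)\rangle\text{ is a rejector}\}\in\mathcal{U}$. Suppose not, so $A\setminus D_r\in\mathcal{U}$; for each $m\in A\setminus D_r$ fix $C_m\in[A\setminus(m+1)]^{\omega}\cap\mathcal{U}$ with $\langle r\cup\{m\},C_m\rangle\Vdash_{\mathbb{M}_{\mathcal{U}}}\phi$. Using the partition property of the Ramsey ultrafilter $\mathcal{U}$ ($2$-colouring the pairs $\{m<m'\}$ from $A\setminus D_r$ by whether $m'\in C_m$) one extracts $D\in[A]^{\omega}\cap\mathcal{U}$ with $D\setminus(m+1)\subseteq C_m$ for all $m\in D$ --- the other colour is impossible, since then $D\setminus\{\min D\}$ would be a set in $\mathcal{U}$ disjoint from $C_{\min D}\in\mathcal{U}$. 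Then the conditions $\langle r\cup\{m\},\,D\setminus(m+1)\rangle$, $m\in D$, are dense below $\langle r,D\rangle$ and each forces $\phi$ (it is a pure extension of $\langle r\cup\{m\},C_m\rangle$), so $\langle r,D\rangle$ is an acceptor pure extension of $\langle r,A\rangle$, a contradiction.

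Next I build $y$ by a fusion, which is cleanest phrased as a run of the ultrafilter game for $\mathcal{U}$: let the Maiden play so that, at each of her moves, she processes in turn $t=\emptyset$ and every finite $t$ (with $\max s<\min t$) contained in Death's moves so far, shrinking the current infinite part first so that $\langle s\cup t,\cdot\rangle$ becomes an acceptor or a rejector, and then, if it became a rejector, to the set $D_{s\cup t}$ from the lemma --- after which $\langle s\cup t\cup\{m\},\cdot\rangle$ is a rejector for every $m$ in the infinite part. These shrinkings do not disturb what was set up for the earlier $t$'s, since ``acceptor'' and ``rejector'' survive shrinking the $\mathcal{U}$-part. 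As $\mathcal{U}$ is Ramsey the Maiden has no winning strategy, so some play is won by Death, giving $y:=\{n_i:i\in\omega\}\in\mathcal{U}$ with $y\subseteq x$. A routine nesting argument (the Maiden's plays decrease and $n_j$ lies in her $j$-th play, so if $t\subseteq y$ then $y\setminus(\max t+1)$ sits inside the Maiden play at which $t$ was processed), together with the persistence properties, transfers everything to $y$: for every finite $t\subseteq y$, $\langle s\cup t,\,y\setminus(\max t+1)\rangle$ is an acceptor or a rejector, and in the rejector case $\langle s\cup t\cup\{m\},\,y\setminus(m+1)\rangle$ is not an acceptor for any $m\in y$ with $m>\max t$.

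Finally, suppose towards a contradiction that $\langle s,y\rangle$ decides neither $\phi$ nor $\neg\phi$. From $\langle s,y\rangle\not\Vdash_{\mathbb{M}_{\mathcal{U}}}\neg\phi$ take an extension $\langle s\cup t',B'\rangle$ forcing $\phi$; since $B'\subseteq y\setminus(\max t'+1)$, this is an acceptor pure extension of $\langle s\cup t',\,y\setminus(\max t'+1)\rangle$, which by the previous paragraph is an acceptor or a rejector and hence (having an acceptor pure extension) an acceptor. So, among the finite $t\subseteq y$ with $\langle s\cup t,\,y\setminus(\max t+1)\rangle$ an acceptor, choose one of least cardinality, say $t^{*}=t_0\cup\{n\}$ with $n=\max t^{*}$ ($t^{*}\neq\emptyset$, as $t^{*}=\emptyset$ would give $\langle s,y\rangle\Vdash_{\mathbb{M}_{\mathcal{U}}}\phi$). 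By minimality $\langle s\cup t_0,\,y\setminus(\max t_0+1)\rangle$ is not an acceptor, hence --- being decided --- a rejector; but then, by the last sentence of the preceding paragraph applied to $t_0$ and $m=n$, the condition $\langle s\cup t^{*},\,y\setminus(\max t^{*}+1)\rangle=\langle s\cup t_0\cup\{n\},\,y\setminus(n+1)\rangle$ is not an acceptor, contradicting that it is one. Hence $\langle s,y\rangle$ decides $\phi$, and this $y$ witnesses the Fact.
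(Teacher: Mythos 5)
The paper does not prove this Fact; it cites it (\cite[Theorem 26.3]{cst}), so there is no in-paper argument to compare against. Your proof is correct and is essentially the standard one: the acceptor/rejector dichotomy, the key lemma that a rejector has $\mathcal{U}$-many one-step rejector extensions, a fusion (here run as a play of the ultrafilter game) to get a single $y$ deciding all stems $s\cup t$, and the minimal-counterexample argument at the end -- each step checks out, including the predensity argument showing $\langle r,D\rangle\Vdash\phi$ and the colour-exclusion step. The only point worth flagging is that your key lemma invokes the partition property $\mathcal{U}\to(\mathcal{U})^2_2$, whereas the paper defines Ramsey ultrafilters via the Maiden/Death game; these characterizations are classically equivalent, but strictly speaking you are using a fact not stated in the paper, and one could instead extract the set $D$ by another run of the game if one wanted to stay entirely within the paper's definition. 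Also, ``dense below'' should read ``predense above'' in the paper's convention that stronger conditions are larger, but this is purely terminological.
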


\begin{defi}
	Recall that a forcing notion $\mathbb{P}$ has the Laver property if for every $\mathbb{P}$-name $\undertilde{g}$ for an element of ${^\omega}\omega$ such that there exists $f\in {^\omega}\omega \cap \mathbf{V}$ with 
\[\mathbb{P}\Vdash \forall n \in \omega: \undertilde{g}(n) \leq f(n),\]
we have that $\mathbb{P}$ forces that there exists $c: \omega \to [\omega]^{<\omega}$ in $\mathbf{V}$ with
\[\forall n \in \omega: |c(n)| \leq 2^{n} \text{ and } \undertilde{g}(n) \in c(n).\]
\end{defi}

\begin{fct}[{e.g., see~\cite[Corollary 26.8]{cst}}]\label{fct:mathiaslaver}
Let $\mathcal{U}$ be a Ramsey ultrafilter. The forcing notion $\mathbb{M}_{\mathcal{U}}$ has the Laver property.
\end{fct}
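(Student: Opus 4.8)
The plan is to derive the Laver property from the pure decision property together with the fact that a Ramsey ultrafilter is \emph{selective}, in the form of the classical ``happy family'' diagonalization property. Fix an $\mathbb{M}_{\mathcal{U}}$-name $\undertilde{g}$ and some $f\in\func\cap\mathbf{V}$ with $\mathbb{M}_{\mathcal{U}}\Vdash\forall n\,\undertilde{g}(n)\le f(n)$. Since forcing ``there is $c\in\mathbf{V}$ with $|c(n)|\le 2^n$ and $\undertilde{g}(n)\in c(n)$ for all $n$'' is equivalent to the density, below an arbitrary condition, of conditions that a single ground-model slalom captures, it suffices to prove the following: given $\langle s,x\rangle\in\mathbb{M}_{\mathcal{U}}$, there is a pure extension $\langle s,y\rangle$ with $y=\{a_0<a_1<\cdots\}\in[x]^{\omega}\cap\mathcal{U}$ and $\min y>\max s$ such that for every $n$ and every $b\subseteq\{a_0,\dots,a_{n-1}\}$ the condition $\langle s\cup b,\,\{a_n,a_{n+1},\dots\}\rangle$ decides the value of $\undertilde{g}(n)$. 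Indeed, letting $c(n)$ be the (at most $2^n$) values so decided, $c$ is a ground-model slalom, and if $G$ is generic with $\langle s,y\rangle\in G$ and $r$ is the Mathias real, then $r\cap a_n=s\cup b$ where $b:=r\cap[\max s+1,a_n)\subseteq\{a_0,\dots,a_{n-1}\}$; a routine density argument using genericity produces a condition in $G$ below $\langle s\cup b,\{a_n,a_{n+1},\dots\}\rangle$, which hence forces $\undertilde{g}(n)\in c(n)$.

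To build such a $y$, first use the pure decision property stated above to define an auxiliary map $\psi$ on finite subsets of $\omega$. For a finite $e$ with $\min e>\max s$, enumerate the $2^{|e|}$ subsets $b$ of $e$ and, successively for each of them, apply pure decision finitely often---once for each candidate value $k\le f(|e|)$---to shrink the second coordinate within $\mathcal{U}$ until $\langle s\cup b,\cdot\rangle$ decides $\undertilde{g}(|e|)$; since further shrinking of the second coordinate preserves decisions already made, this yields after finitely many steps a set $\psi(e)\in\mathcal{U}$ with $\psi(e)\subseteq x$, $\min\psi(e)>\max e$, such that $\langle s\cup b,\psi(e)\rangle$ decides $\undertilde{g}(|e|)$ for \emph{every} $b\subseteq e$ (and we arrange $\min\psi(\emptyset)>\max s$). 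Replacing $\psi(e)$ by $\bigcap_{e'\subseteq e}\psi(e')$ we may assume $\psi$ is $\subseteq$-decreasing.

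Finally, invoke the selectivity of the Ramsey ultrafilter $\mathcal{U}$ in the form: for any map from finite sets into $\mathcal{U}$ there is a diagonalizing set in $\mathcal{U}$. This provides $y=\{a_0<a_1<\cdots\}\in\mathcal{U}$ with $a_0\in\psi(\emptyset)$ and $a_k\in\psi(\{a_0,\dots,a_{k-1}\})$ for all $k\ge1$; by $\subseteq$-monotonicity of $\psi$ we get $\{a_n,a_{n+1},\dots\}\subseteq\psi(\{a_0,\dots,a_{n-1}\})$ for every $n$, so for $b\subseteq\{a_0,\dots,a_{n-1}\}$ the condition $\langle s\cup b,\{a_n,a_{n+1},\dots\}\rangle$ (which is a condition, being of the form $\langle\text{finite},\,y\setminus\text{finite}\rangle$) extends $\langle s\cup b,\psi(\{a_0,\dots,a_{n-1}\})\rangle$ and therefore decides $\undertilde{g}(n)$, as needed; moreover $y\subseteq x$ with $\min y>\max s$, so $\langle s,y\rangle$ is a pure extension of $\langle s,x\rangle$. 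The individual pure-decision shrinkings and the slalom bookkeeping are routine; the one real point is amalgamating them into a single $y$ that still lies in $\mathcal{U}$, and this is exactly where the Ramsey property of $\mathcal{U}$ is used and cannot be weakened---for \emph{unrestricted} Mathias forcing the second coordinate may be an arbitrary infinite set, so a greedy diagonalization suffices and no ultrafilter hypothesis enters, whereas here the diagonalization must be carried out inside $\mathcal{U}$. Note that the bound $|c(n)|\le2^n$ in the definition of the Laver property is precisely what this argument delivers, since $r\cap a_n$ has at most $2^n$ possible values.
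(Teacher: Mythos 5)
The paper does not prove this statement: it is recorded as a Fact with a pointer to \cite[Corollary 26.8]{cst}, so there is no in-paper proof to compare against. Your argument is a correct rendition of the standard proof from that source: reduce the Laver property to finding, below any $\langle s,x\rangle$, a pure extension $\langle s,y\rangle$ such that every $\langle s\cup b,y\setminus a_n\rangle$ with $b\subseteq y\cap a_n$ decides $\undertilde{g}(n)$ (giving at most $2^n$ possible values), build the fusion data by finitely many applications of pure decision for each finite set, and amalgamate via the Ramsey diagonalization property. This is exactly the technique the paper itself uses in Lemmas \ref{lemma1} and \ref{lemma2}, where the diagonalization is phrased through the ultrafilter game; your appeal to ``selectivity'' is the same fact in different clothing, and all the side conditions (that $\langle s\cup b,\{a_n,a_{n+1},\dots\}\rangle$ is a legitimate condition extending $\langle s\cup b,\psi(\{a_0,\dots,a_{n-1}\})\rangle$, and the genericity/density step at the end) check out.
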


\begin{fct}[{e.g., see~\cite[Ch. VI, 2.10D]{proper}}]\label{fct:preservationlaver}
	The Laver property is preserved under countable support iterations of proper forcing notions.
\end{fct}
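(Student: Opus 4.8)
The plan is to prove the statement by induction on the length $\delta$ of the countable support iteration $\langle\mathbb{P}_\alpha,\undertilde{\mathbb{Q}}_\alpha:\alpha<\delta\rangle$ of proper forcings, following the standard schema for bounding-type (``Case A'') preservation theorems for countable support iterations; the reference is \cite[Ch.~VI]{proper}. Throughout we use, without further comment, the elementary fact that the Laver property is unchanged when the slalom-width bound $2^n$ in its definition is replaced by any nondecreasing unbounded $h\colon\omega\to\omega$ with $h\geq 1$: one passes between width bounds by regrouping $\omega$ into consecutive finite blocks, and this flexibility lets us absorb the constant- and polynomial-factor losses that occur below.

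Successor case, $\delta=\beta+1$. It suffices to handle a two-step iteration $\mathbb{P}_\beta*\undertilde{\mathbb{Q}}_\beta$. Let $\undertilde{g}$ be a name for an element of $\prod_n f(n)$ with $f\in\mathbf{V}$. Working in $\mathbf{V}^{\mathbb{P}_\beta}$ and invoking the Laver property of $\undertilde{\mathbb{Q}}_\beta$, we get a slalom $\undertilde{c}'$ with $|\undertilde{c}'(n)|\leq 2^n$ and $\undertilde{g}(n)\in\undertilde{c}'(n)$ for all $n$; intersecting with $f(n)$ we may assume $\undertilde{c}'(n)\subseteq f(n)$. Since each $[f(n)]^{\leq 2^n}$ is a finite set lying in $\mathbf{V}$, the slalom $\undertilde{c}'$ is coded by a $\mathbb{P}_\beta$-name for a function bounded by the ground-model function $n\mapsto|[f(n)]^{\leq 2^n}|$; applying the Laver property of $\mathbb{P}_\beta$ to that coding name, and then taking coordinatewise the union of the finitely many resulting candidate sets, we obtain a ground-model slalom that captures $\undertilde{g}$ with width bounded by a fixed polynomial in $2^n$ --- hence, by the reformulation above, with width at most $2^n$.

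Limit case --- the heart of the matter. Fix a sufficiently large regular $\chi$, a countable $\mathcal{N}\prec H(\chi)$ containing all relevant data, a condition $p\in\mathbb{P}_\delta\cap\mathcal{N}$, and a $\mathbb{P}_\delta$-name $\undertilde{g}\in\mathcal{N}$ for an element of $\prod_n f(n)$ with $f\in\mathbf{V}$; it suffices to produce an $(\mathcal{N},\mathbb{P}_\delta)$-generic $q\leq p$ together with a ground-model slalom $c$, $|c(n)|\leq 2^n$, such that $q\Vdash\forall n\,\undertilde{g}(n)\in c(n)$. When $\operatorname{cf}(\delta)>\omega$, every condition has bounded support, $\mathcal{N}\cap\delta$ is bounded by some $\alpha<\delta$, and --- extending $p$ to an $(\mathcal{N},\mathbb{P}_\delta)$-generic $q$ --- one checks in the standard way that $q$ forces $\undertilde{g}$ to be (equivalent to) a $\mathbb{P}_\alpha$-name (each value $\undertilde{g}(n)$ is decided by a condition of $\mathcal{N}$, whose support lies below $\alpha$), so the induction hypothesis at $\mathbb{P}_\alpha$ furnishes $c$. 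When $\operatorname{cf}(\delta)=\omega$, fix an increasing cofinal sequence $\langle\delta_i:i\in\omega\rangle$; one runs Shelah's countable-support fusion that builds an $(\mathcal{N},\mathbb{P}_\delta)$-generic condition, in the refined form where at step $i$ one constructs not a single condition but a finite tree of generic conditions in $\mathbb{P}_{\delta_i}$ whose branching tracks the (boundedly many) possibilities for $\undertilde{g}\restriction B_i$, where $B_i$ is the $i$-th block; these conditions cohere along branches, the union along any branch is a legitimate $(\mathcal{N},\mathbb{P}_\delta)$-generic condition forcing $\undertilde{g}\restriction B_i$ into a ground-model set $E_i$ of boundedly many tuples, and one lets $c$ agree with $E_i$ on $B_i$. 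The bound on the branching at step $i$, hence on $|E_i|$, comes from the induction hypothesis at the stages $\delta_j$.

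The step I expect to be the main obstacle is this last construction in the $\operatorname{cf}(\delta)=\omega$ case, and the difficulty is twofold. First, the genericity requirements repeatedly force the working conditions to shrink inside $\mathcal{N}$ and to descend through the iteration, and each such move multiplies the branching of the tree (equivalently, the size of $E_i$), whereas the slalom must have width at most $2^n$; reconciling this is exactly what the blocking reformulation from the first paragraph is for, and it forces one to fix the lengths of the blocks $B_i$ in lock-step with the rest of the bookkeeping. Second, one must verify that the branch-conditions produced by the fusion genuinely close off to legitimate elements of $\mathbb{P}_\delta$ that are $(\mathcal{N},\mathbb{P}_\delta)$-generic --- this is the content of Shelah's proof that countable support iterations of proper forcings are proper, and it is here that properness of all the iterands and the induction hypothesis at every earlier stage are used.
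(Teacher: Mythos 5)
The paper does not prove this statement at all: it is quoted as a known fact with a pointer to \cite[Ch.~VI, 2.10D]{proper}, so there is no in-paper argument to compare yours against. What you have written is a correct outline of the argument in the cited source (Shelah's ``Case A'' preservation schema). The successor step is right and essentially complete: compose the two slaloms, coding the intermediate slalom as a function bounded in the ground model, and absorb the resulting width $(2^n)^2$ via the block-regrouping equivalence of width functions; your parenthetical reformulation is legitimate because the Laver property is a density statement, so on the finitely many coordinates where the target width is momentarily too small one may simply extend the condition to decide the value outright. The uncountable-cofinality limit case is also correctly reduced to a bounded stage via an $(\mathcal{N},\mathbb{P}_\delta)$-generic condition. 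The one place where your proposal falls short of a proof is exactly the place you flag: for $\operatorname{cf}(\delta)=\omega$ you describe the tree-of-conditions fusion --- finite coherent trees of generic conditions in $\mathbb{P}_{\delta_i}$ whose branching tracks the possible values of the name on the $i$-th block, amalgamating to a single $(\mathcal{N},\mathbb{P}_\delta)$-generic condition --- but you do not carry it out, and that construction (the coherence of the trees across stages, the verification that branch limits are legitimate generic conditions, and the synchronization of the branching bound with the block lengths) is the entire mathematical content of the preservation theorem and occupies several pages in the reference. So: correct architecture, no wrong step, but the hard lemma is invoked rather than proved --- which puts your write-up on the same footing as the paper's, namely an appeal to \cite[Ch.~VI]{proper} for the real work.
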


\section{Result}

Now, we are ready to state the 

\begin{mainthm}
It is consistent that there is a unique $Q$-point while there are  $2^{\mathfrak{c}}$-many near-coherence classes of ultrafilters.
\end{mainthm}

\rm The proof is given by the following construction and the subsequent
results:
Assume that the ground model $\mathbf{V}$ satisfies $\textsf{CH}$. 
By induction, we define:
\begin{itemize}
\item[(i)] A countable support iteration $\mathbb{P}_{\omega_2}:=\langle \mathbb{P}_{\xi}, \undertilde{Q}_\xi: \xi \in \omega_2\rangle$ of c.c.c. forcing notions,
\item[(ii)] A sequence $\langle \undertilde{\mathcal{U}}_\xi: \xi \in \omega_2\rangle$, such that
\[\forall \xi \in \omega_2: \mathbb{P}_\xi \Vdash ``\undertilde{\mathcal{U}}_\xi \text{ is a Ramsey ultrafilter extending } \bigcup_{\iota \in \xi} \undertilde{\mathcal{U}}_\iota"\]
and $\undertilde{Q}_\xi$ is a $\mathbb{P}_\xi$-name for Mathias forcing restricted to $\undertilde{\mathcal{U}}_\xi$,
\end{itemize} 

Assume that we are in step $\xi \in \omega_2$. Let $G_\xi$ be $\mathbb{P}_\xi$-generic over $\mathbf{V}$  and work in $\mathbf{V}[G_\xi]$. Note that since $\mathbb{P}_\xi$ is a countable support iteration of proper forcing notions that are forced to be of size $\leq \omega_1$, we have $\mathbf{V}[G_\xi]\models \textsf{CH}$ (e.g., see~\cite[Theorem 2.12]{abraham2009proper}). For each $\iota \in \xi$, let $\eta_{\iota}$ be the Mathias real added at stage $\iota$.

If $\xi=\xi'+1$, $\eta_{\xi'}$ pseudo-intersects $\undertilde{\mathcal{U}}_{\xi'}[G_\xi]$ and we may construct a Ramsey ultrafilter on $\eta_{\xi'}$ using \textsf{CH} (and extend it to $\omega$ to obtain $\mathcal{U}_\xi$). Similarly, if $\xi$ is a limit ordinal and $\text{cf}(\xi)=\omega$, we can build $\mathcal{U}_\xi$ on a pseudo-intersection of the tower $\langle \eta_\iota: \iota \in \xi\rangle$. Finally, if $\text{cf}(\xi)=\omega_1$, then $\bigcup_{\iota \in \xi} \undertilde{\mathcal{U}}_\iota[G_\xi]$ is already a Ramsey ultrafilter, since no new reals are added at stage $\xi$. For the same reason we also have that $\mathcal{U}_{\omega_2}:=\bigcup_{\xi \in \omega_2} \undertilde{\mathcal{U}}_\xi[G]$ is a Ramsey ultrafilter in $\mathbf{V}[G]$, where $G$ is $\mathbb{P}_{\omega_2}$-generic over $\mathbf{V}$.

\begin{fct}[see, e.g.,{~\cite[Theorem 2.10]{abraham2009proper}}]
$\mathbb{P}_{\omega_2}$ is proper and satisfies $\omega_2$-c.c..
\end{fct}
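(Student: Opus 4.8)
The plan is to obtain both conclusions as an instance of the standard theory of countable support iterations of proper forcing, which is exactly what the cited \cite[Theorem 2.10]{abraham2009proper} provides; I only indicate the shape of the argument. \emph{Properness} is immediate: each $\undertilde{Q}_\xi$ is a $\mathbb{P}_\xi$-name for $\mathbb{M}_{\undertilde{\mathcal{U}}_\xi}$, which $\mathbb{P}_\xi$ forces to be c.c.c.\ and hence proper, so $\mathbb{P}_{\omega_2}$ is proper by Shelah's preservation theorem for countable support iterations of proper forcings (the same machinery underlying Fact~\ref{fct:preservationlaver}). In particular $\omega_1$ is preserved and, by definition of a countable support iteration, every condition of $\mathbb{P}_{\omega_2}$ has countable support.

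For the \emph{$\omega_2$-c.c.}, the first step is the bookkeeping lemma that $|\mathbb{P}_\xi|\le\omega_1$ for every $\xi<\omega_2$, proved by induction on $\xi$; this is where the preservation of \textsf{CH} along the iteration -- already observed above -- together with the fact that each $\undertilde{Q}_\xi$ is forced to have size $\le\omega_1$ gets used. Now let $\{p_\alpha:\alpha<\omega_2\}\subseteq\mathbb{P}_{\omega_2}$ be given. Each $\operatorname{supp}(p_\alpha)$ is a countable subset of $\omega_2$, and since \textsf{CH} yields $\mu^\omega<\omega_2$ for every $\mu<\omega_2$, the $\Delta$-system lemma produces $A\in[\omega_2]^{\omega_2}$ such that $\{\operatorname{supp}(p_\alpha):\alpha\in A\}$ forms a $\Delta$-system with some countable root $R$. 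Fix $\xi_0<\omega_2$ with $R\subseteq\xi_0$ (possible because $\operatorname{cf}(\omega_2)>\omega$). Since $|\mathbb{P}_{\xi_0}|\le\omega_1<\omega_2$, we may thin $A$ to $A'\in[\omega_2]^{\omega_2}$ on which the map $\alpha\mapsto p_\alpha\restriction\xi_0$ is constant, say with value $q$.

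Finally, for any distinct $\alpha,\beta\in A'$ the conditions $p_\alpha$ and $p_\beta$ agree on $\xi_0$ (both equal $q$) and have disjoint supports above $\xi_0$ (as $\operatorname{supp}(p_\alpha)\cap\operatorname{supp}(p_\beta)=R\subseteq\xi_0$), so they amalgamate over $q$: the function equal to $q$ below $\xi_0$ and equal to $p_\alpha$, respectively $p_\beta$, on the portion of its support lying above $\xi_0$ is a common extension. Hence $\{p_\alpha:\alpha<\omega_2\}$ is not an antichain, so $\mathbb{P}_{\omega_2}$ satisfies the $\omega_2$-c.c.\ The only ingredient here that goes beyond cardinal arithmetic is this amalgamation step, which is proved by induction on the length of the iteration -- in a countable support iteration, two conditions agreeing on an initial segment and having disjoint supports beyond it are compatible -- the limit stages causing no difficulty precisely because a union of two countable supports is again countable. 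I expect this amalgamation lemma, together with pinning down the bound $|\mathbb{P}_\xi|\le\omega_1$, to be the only non-routine points; everything else is standard counting under \textsf{CH}.
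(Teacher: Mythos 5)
The paper gives no proof of this Fact at all---it is quoted directly from the standard literature on countable support iterations---and your sketch is exactly the standard argument behind the cited reference: properness via Shelah's preservation theorem for countable support iterations of proper (here even c.c.c.) iterands, and the $\omega_2$-c.c.\ via \textsf{CH}, the $\Delta$-system lemma for the countable supports, and amalgamation of two conditions that agree on an initial segment containing the root and have disjoint supports above it. The one point to state a bit more carefully is the bound $|\mathbb{P}_\xi|\le\omega_1$: taken literally the set of conditions can be larger because of the proliferation of names, so one should either work with a fixed set of (hereditarily countable) names for the iterands or replace the bound by ``$\mathbb{P}_\xi$ has a dense subset of size $\le\omega_1$,'' which is what the thinning step actually needs.
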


We need to show that $\mathcal{U}_{\omega_2}$ is the only $Q$-point in $\mathbf{V}[G]$. To see this, assume by contradiction that $\mathbf{V}[G]\models ``E\text{ is a }Q\text{-point and not isomorphic to }\mathcal{U}_{\omega_2}"$.

\begin{lem}
There exists $\delta \in \omega_2$ such that $E \cap \mathbf{V}[G_\delta] \in \mathbf{V}[G_\delta]$ and $\mathbf{V}[G_\delta] \models ``E \cap \mathbf{V}[G_\delta] \text{ is a }Q\text{-point and not isomorphic to }\mathcal{U}_\delta"$.
\end{lem}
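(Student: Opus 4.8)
The plan is to find, by a reflection argument, one ordinal $\delta\in\omega_2$ with $\mathrm{cf}(\delta)=\omega_1$ that is sufficiently closed, and then to read the three assertions off one at a time. First I fix a $\mathbb{P}_{\omega_2}$-name $\undertilde{E}$ for $E$ and record what I will use about the iteration: each $\mathbb{P}_\alpha$ is a complete suborder of $\mathbb{P}_{\omega_2}$; if $\mathrm{cf}(\delta)=\omega_1$ then no reals are added at stage $\delta$, so every real of $\mathbf{V}[G_\delta]$ already lies in some $\mathbf{V}[G_\alpha]$ with $\alpha<\delta$; each $\mathbf{V}[G_\alpha]$ satisfies $\textsf{CH}$; $\undertilde{\mathcal{U}}_\delta$ is a $\mathbb{P}_\delta$-name, and, since $\mathcal{U}_{\omega_2}\supseteq\mathcal{U}_\delta$ and $\mathcal{U}_\delta$ is already an ultrafilter on $\mathcal{P}(\omega)\cap\mathbf{V}[G_\delta]$, we have $\mathcal{U}_\delta=\mathcal{U}_{\omega_2}\cap\mathbf{V}[G_\delta]$. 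The one less obvious ingredient is that, by the $\omega_2$-c.c.\ of $\mathbb{P}_{\omega_2}$, for every real $z\in\mathbf{V}[G]$ there is $\mu_z<\omega_2$ such that the statement ``$\check z\in\undertilde{E}$'' is decided by $G_{\mu_z}$: a maximal antichain of conditions deciding ``$\check z\in\undertilde{E}$'' has size $\le\omega_1$, hence only countably many coordinates are involved and it is contained in $\mathbb{P}_{\mu_z}$ for some $\mu_z<\omega_2$, and since $\mathbb{P}_{\mu_z}\lessdot\mathbb{P}_{\omega_2}$ this antichain is maximal in $\mathbb{P}_{\mu_z}$, hence met by $G_{\mu_z}$.

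Next I define three maps $\omega_2\to\omega_2$. For $\alpha<\omega_2$ put $g(\alpha):=\sup\{\mu_z:z\in\mathcal{P}(\omega)\cap\mathbf{V}[G_\alpha]\}$; put $h(\alpha):=\sup\{\gamma_P:P\in\mathbf{V}[G_\alpha]\text{ an interval partition of }\omega\}$, where for each interval partition $P$ of $\omega$ in $\mathbf{V}[G]$ I have fixed once and for all a selector $x_P\in E$ for $P$ — which exists because $E$ is a $Q$-point — and $\gamma_P$ is the least stage with $x_P\in\mathbf{V}[G_{\gamma_P}]$; and put $k(\alpha):=\sup\{\beta_\phi:\phi\in\mathbf{V}[G_\alpha]\text{ a permutation of }\omega\}$, where for each permutation $\phi$ of $\omega$ in $\mathbf{V}[G]$ I have fixed a real $z_\phi$ with $\neg\bigl(z_\phi\in\mathcal{U}_{\omega_2}\leftrightarrow\phi[z_\phi]\in E\bigr)$ — such $z_\phi$ exists because $E$ is not isomorphic to $\mathcal{U}_{\omega_2}$ — and $\beta_\phi$ is its least stage. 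Since $\mathbf{V}[G_\alpha]\models\textsf{CH}$, in each case only $\omega_1$ reals of $\mathbf{V}[G_\alpha]$ are relevant, so $g,h,k$ are genuine functions into $\omega_2$; their closure points form a club, and as the ordinals of cofinality $\omega_1$ below $\omega_2$ are stationary I may choose $\delta<\omega_2$ with $\mathrm{cf}(\delta)=\omega_1$ closed under $g$, $h$ and $k$.

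Now write $F:=E\cap\mathbf{V}[G_\delta]$ and verify the three clauses for this $\delta$. For $F\in\mathbf{V}[G_\delta]$: every $z\in\mathcal{P}(\omega)\cap\mathbf{V}[G_\delta]$ lies in some $\mathbf{V}[G_\alpha]$ with $\alpha<\delta$ (no new reals at $\delta$), so $\mu_z\le g(\alpha)<\delta$ and hence ``$\check z\in\undertilde{E}$'' is decided by a condition of $G_\delta$; therefore $F=\{z\in\mathcal{P}(\omega)\cap\mathbf{V}[G_\delta]:\exists p\in G_\delta,\ p\Vdash_{\mathbb{P}_{\omega_2}}\check z\in\undertilde{E}\}$, which is defined inside $\mathbf{V}[G_\delta]$ from $G_\delta$, the ground-model forcing relation, and the set $\mathcal{P}(\omega)\cap\mathbf{V}[G_\delta]\in\mathbf{V}[G_\delta]$. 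Since $E$ decides every member of $\mathcal{P}(\omega)\cap\mathbf{V}[G_\delta]$, the set $F$ is an ultrafilter of $\mathbf{V}[G_\delta]$, and it is a $Q$-point there: any interval partition $P\in\mathbf{V}[G_\delta]$ lies in some $\mathbf{V}[G_\alpha]$ with $\alpha<\delta$, so its fixed selector satisfies $x_P\in\mathbf{V}[G_{\gamma_P}]\subseteq\mathbf{V}[G_\delta]$ (as $\gamma_P\le h(\alpha)<\delta$), whence $x_P\in E\cap\mathbf{V}[G_\delta]=F$. Finally $F$ is not isomorphic to $\mathcal{U}_\delta$ in $\mathbf{V}[G_\delta]$: a permutation $\phi\in\mathbf{V}[G_\delta]$ with $\{\phi[A]:A\in\mathcal{U}_\delta\}=F$ would satisfy $A\in\mathcal{U}_{\omega_2}\leftrightarrow\phi[A]\in E$ for all $A\in\mathbf{V}[G_\delta]$ (using $\mathcal{U}_\delta=\mathcal{U}_{\omega_2}\cap\mathbf{V}[G_\delta]$ and $F=E\cap\mathbf{V}[G_\delta]$); but $\phi\in\mathbf{V}[G_\alpha]$ for some $\alpha<\delta$, so $z_\phi\in\mathbf{V}[G_{\beta_\phi}]\subseteq\mathbf{V}[G_\delta]$ (as $\beta_\phi\le k(\alpha)<\delta$), and $z_\phi$ refutes that equivalence.

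The only point that is not routine bookkeeping is the assertion, used for $F\in\mathbf{V}[G_\delta]$, that each local membership statement ``$\check z\in\undertilde{E}$'' for a real $z$ appearing before stage $\delta$ is decided already by $G_\delta$; this is exactly where the $\omega_2$-c.c.\ and the completeness of the embeddings $\mathbb{P}_\alpha\lessdot\mathbb{P}_{\omega_2}$ enter, and it is what upgrades $E\cap\mathbf{V}[G_\delta]$ from an ultrafilter merely computed in $\mathbf{V}[G]$ to an element of $\mathbf{V}[G_\delta]$. (The Laver property of the iteration is not needed here.) Everything else is reflection through the three pressing-down maps together with the quoted facts about countable support iterations; one could present the same argument — perhaps more smoothly — via an $\omega$-closed elementary submodel $N\prec H_\theta^{\mathbf{V}[G]}$ of size $\omega_1$ with $N\cap\omega_2=\delta$ containing all relevant parameters, so that $\mathbb{P}_\delta=\mathbb{P}_{\omega_2}\cap N$, $\mathcal{P}(\omega)^{\mathbf{V}[G_\delta]}=\mathcal{P}(\omega)^{\mathbf{V}[G]}\cap N$, and the relevant antichains, selectors and witnesses can be taken inside $N$.
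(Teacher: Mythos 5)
Your argument is correct and is essentially the paper's own proof: both catch, below a stage $\delta$ of cofinality $\omega_1$, the $\omega_1$-many (by \textsf{CH} at each intermediate stage) antichains deciding membership in $E$, together with $Q$-point selectors and non-isomorphism witnesses, using the $\omega_2$-c.c.\ and the fact that no new reals appear at a cofinality-$\omega_1$ stage; the paper realizes the closure by iterating $\xi\mapsto\gamma_\xi$ for $\omega_1$ steps and setting $\delta=\lambda(\omega_1)$, while you take a cofinality-$\omega_1$ closure point of the maps $g,h,k$, which is the same device. (Two cosmetic slips: for $z\notin\mathbf{V}$ the notation $\check z$ should be replaced by a $\mathbb{P}_\alpha$-name for $z$ --- the paper sidesteps this by naming an enumeration of $E\cap\mathbf{V}[G_\xi]$ and deciding ``$n\in e_\xi(\alpha)$'' instead --- and an antichain of size $\omega_1$ involves up to $\omega_1$, not countably many, coordinates, which still suffices to bound it inside some $\mathbb{P}_{\mu_z}$.)
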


\begin{proof}

 Fix $\xi \in \omega_2$ and consider names $\undertilde{e}_\xi$, $\undertilde{i}_\xi$, $\undertilde{s}_\xi$, $\undertilde{b}_\xi$ and $\undertilde{f}_\xi$ such that $\mathbb{P}_{\omega_2}$ forces that
 \begin{itemize}
 \item[(i)] $``\undertilde{e}_\xi$ is an enumeration (in $\omega_1$) of $\undertilde{E} \cap \mathbf{V}[\underaccent{\dot}{G}_\xi]"$. For each $\alpha \in \omega_1$ and $n \in \omega$ let $\mathcal{E}_{\xi, \alpha, n} \subseteq \mathbb{P}_{\omega_2}$ be a maximal antichain deciding $``n \in \undertilde{e}_\xi(\alpha)"$.
 \item[(ii)] $``\undertilde{i}_\xi$ is an enumeration (in $\omega_1$) of the set of interval partitions of $\omega$ in $\mathbf{V}[\underaccent{\dot}{G}_\xi]"$. Note that we may assume that $\undertilde{i}_\xi$ is a $\mathbb{P}_\xi$-name.
 \item[(iii)] ``For all $\alpha \in \omega_1$, $\undertilde{s}_\xi(\alpha)$ is an element of $\undertilde{E}$ that intersects each interval in the interval partition $\undertilde{i}_{\xi}(\alpha)$ in at most one point". Let $\mathcal{S}_{\xi, \alpha, n}\subseteq \mathbb{P}_{\omega_2}$ be a maximal antichain deciding $``n \in \undertilde{s}_\xi(\alpha)"$.
 \item[(iv)] $``\undertilde{b}_\xi$ is an enumeration (in $\omega_1$) of all permutations of $\omega$ in $\mathbf{V}[\underaccent{\dot}{G}_\xi]"$. We may again assume that $\undertilde{b}_\xi$ is a $\mathbb{P}_\xi$-name.
 \item[(v)] ``For all $\alpha \in \omega_1$, $\undertilde{f}_\xi(\alpha)$ is a pair $\langle \undertilde{x}_\alpha, \undertilde{y}_\alpha \rangle$ such that $\undertilde{x}_\alpha$ is in $\undertilde{E}$,  $\undertilde{y}_\alpha$ is in $\undertilde{\mathcal{U}}_{\omega_2}$ and $\undertilde{b}_\xi(\alpha)[\undertilde{x}_\alpha]$ is disjoint from $\undertilde{y}_\alpha"$. Let $\mathcal{X}_{\xi, \alpha, n} \subseteq \mathbb{P}_{\omega_2}$ be a maximal antichain deciding $``n \in \undertilde{x}_\alpha"$, and define $\mathcal{Y}_{\xi, \alpha,n}$ analogously.
 \end{itemize}
 
Since $\mathbb{P}_{\omega_2}$ satisfies $\omega_2$-c.c., 
there exists for each $\xi \in \omega_2$ some $\gamma_\xi \in \omega_2$ greater than $\xi$ such that all the above antichains consist of $\mathbb{P}_{\gamma_\xi}$-conditions. Recursively define $\lambda(0)=0$, $\lambda(\xi+1)=\gamma_{\lambda(\xi)}$ and for limit ordinals $\xi: \lambda(\xi)=\bigcup_{\iota \in \xi}\lambda(\iota)$, for $\xi \leq \omega_1$. Set $\delta:=\lambda(\omega_1)$ and consider the extension $\mathbf{V}[G_\delta]$. Since $\text{cf}(\delta)=\omega_1$, we have that $E \cap \mathbf{V}[G_\delta] =\bigcup_{\iota \in \omega_1}E \cap \mathbf{V}[G_{\lambda(\iota)}] $, and since each $E \cap \mathbf{V}[G_{\lambda(\iota)}]$ is an element of $\mathbf{V}[G_\delta]$ by (i), $E \cap \mathbf{V}[G_\delta]$ is an element of $\mathbf{V}[G_\delta]$ (and an ultrafilter). Furthermore, any interval partition of $\omega$ in $\mathbf{V}[G_\delta]$ already appears in some $\mathbf{V}[G_{\lambda(\iota)}]$, $\iota \in  \omega_1$, where it equals $\undertilde{i}_{\lambda(\iota)}[G_ {\lambda(\iota)}](\alpha)$ for some $\alpha \in \omega_1$. Since $\undertilde{s}_{\lambda(\iota)}[G_\delta](\alpha) \in E \cap \mathbf{V}[G_\delta]$, we obtain that $E \cap \mathbf{V}[G_\delta]$ is a $Q$-point. Finally and analogously, any permutation of $\omega$ in $\mathbf{V}[G_\delta]$ already appears in $\mathbf{V}[G_ {\lambda(\iota)}]$ for some $\iota \in \omega_1$ and hence there are witnesses $\undertilde{x}_\alpha[G_\delta] \in E \cap \mathbf{V}[G_\delta]$ and $\undertilde{y}_\alpha[G_\delta] \in \mathcal{U}_{\omega_2} \cap \mathbf{V}[G_\delta]=\mathcal{U}_\delta$ witnessing that $E \cap \mathbf{V}[G_\delta]$ and $\mathcal{U}_\delta$ are not isomorphic.
\end{proof}

We now designate $\mathbf{V}[G_\delta]$ as the new ground model and rename the $Q$-point $E \cap \mathbf{V}[G_\delta]$ to $E$ and the Ramsey ultrafilter $\mathcal{U}_{\delta}$ to $\mathcal{U}$. Note that by the Factor-Lemma (e.g., see~\cite[Theorem 4.6]{goldstern1992tools}), the quotient $\mathbb{P}_{\omega_2}/G_{\delta}$ is again isomorphic to a countable support iteration of restricted Mathias forcings. In particular, by Facts \ref{fct:mathiaslaver} and \ref{fct:preservationlaver}, $\mathbb{P}_{\omega_2}/G_{\delta}$ is isomorphic to the two-step iteration $\mathbb{M}_{\mathcal{U}}\ast \undertilde{R}$, where $\mathbb{M}_{\mathcal{U}} \Vdash ``\undertilde{R} \text{ has the Laver property}"$.

It remains to show the following

\begin{prp}
	Let $E$ be a $Q$-point and $\mathcal{U}$ a Ramsey ultrafilter such that $E$ and $\mathcal{U}$ are not isomorphic. Let $\mathbb{M}_{\mathcal{U}}$ be Mathias forcing restricted to $\mathcal{U}$ and let $\undertilde{R}$ be a $\mathbb{M}_{\mathcal{U}}$-name such that $\mathbb{M}_{\mathcal{U}} \Vdash ``\undertilde{R}$ has the Laver property". Then $\mathbb{M}_{\mathcal{U}} \ast \undertilde{R}\Vdash ``E$ cannot be extended to a $Q$-point".
\end{prp}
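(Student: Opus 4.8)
The plan is to argue by contradiction. Suppose some condition of $\mathbb{M}_{\mathcal U}\ast\undertilde R$ forces that $\undertilde{E}'$ is a $Q$-point with $E\subseteq\undertilde{E}'$; I would work below such a condition, let $\eta$ be the Mathias real added by $\mathbb{M}_{\mathcal U}$ and $r$ the $\undertilde R$-generic over $\mathbf V[\eta]$, and pass to $\mathbf V[\eta][r]$ with the $Q$-point $E'\supseteq E$. The first, cheap step is to observe that $\eta\notin E'$: since $E\neq\mathcal U$ and both are ultrafilters there is $B\in E\setminus\mathcal U$, so $\omega\setminus B\in\mathcal U$, hence $\eta\subseteq^\ast\omega\setminus B$ and $\eta\cap B$ is finite, which is impossible if $\eta,B\in E'$. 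Hence $\omega\setminus\eta\in E'$.

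Next I would reflect a selector down into $\mathbf V[\eta]$ using the Laver property. As $E'$ is a $Q$-point it is rapid, so applying rapidity to the increasing enumeration $e_\eta$ of $\eta$ gives $X\in E'$ with $|X\cap e_\eta(n)|\le n$ for all $n$; intersecting with $\omega\setminus\eta$ we may take $X\subseteq\omega\setminus\eta$. Then $n\mapsto X\cap[e_\eta(n),e_\eta(n+1))$ takes values in a $\mathbf V[\eta]$-bounded family of finite sets, so the Laver property of $\undertilde R$ over $\mathbf V[\eta]$ produces $Z\in\mathbf V[\eta]$ with $X\subseteq Z\subseteq\omega\setminus\eta$ and $|Z\cap e_\eta(n)|\le 2^{n}(n+1)$ for all $n$; in particular $Z\in E'\cap\mathbf V[\eta]$, and $Z$ is, up to a slowly growing reparametrization, at least as sparse as $\eta$.

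Then I would describe $Z$ combinatorially over $\mathbf V$. Fixing an $\mathbb{M}_{\mathcal U}$-name $\undertilde Z$ for $Z$ and a condition in the $\mathbb{M}_{\mathcal U}$-generic forcing the above properties, I would use the pure decision property together with a fusion along $\mathcal U$ (available since $\mathcal U$ is Ramsey, hence selective) to shrink its second coordinate to some $A_0\in\mathcal U$ below which $\undertilde Z$ is read continuously from the generic: for every finite $s\subseteq A_0$ the condition $\langle s,A_0\setminus(\max s+1)\rangle$ decides $\undertilde Z\cap[0,\max s]$. Since $\undertilde Z$ is forced disjoint from $\eta$, this yields functions $\Gamma_n\in\mathbf V$ with $Z\cap[e_\eta(n),e_\eta(n+1))=\Gamma_n(e_\eta(0),\dots,e_\eta(n+1))\subseteq(e_\eta(n),e_\eta(n+1))$ of size $\le 2^{n}(n+1)$, so that $Z=\bigsqcup_n\Gamma_n(e_\eta(0),\dots,e_\eta(n+1))$.

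The heart of the proof, and the step I expect to be the main obstacle, is the following claim, to be proved in $\mathbf V$: the set of conditions forcing ``$\undertilde Z\cap\check B$ is finite for some $B\in E$'' is dense below $\langle\emptyset,A_0\rangle$. Granting it, the statement holds in $\mathbf V[\eta]$ because $\eta$ is generic, so there is $B\in E$ with $Z\cap B$ finite; since $B\in E\subseteq E'$ and $Z\in E'$, this contradicts that $E'$ is an ultrafilter. To prove the claim one fixes $\langle s,C\rangle$ and seeks $C'\subseteq C$ in $\mathcal U$ and $B\in E$ with $\langle s,C'\rangle\Vdash\undertilde Z\cap\check B=\emptyset$; because $Z$ depends on $\eta$ only through the $\Gamma_n$'s, this amounts to arranging $W(s,C'):=\bigcup\{Z(s\cup x):x\in[C']^\omega,\ \min x>\max s\}\notin E$, where $W(s,C')$ is a set built purely from ground-model data, so it suffices to find such $C'$ with $\omega\setminus W(s,C')\in E$. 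One would do this by a further fusion inside $\mathcal U$ run in tandem with a choice of $B\in E$; the place where non-isomorphism is indispensable is exactly here --- if $W(s,C')$ could not be driven out of $E$ however $C'$ is thinned inside $\mathcal U$, the matching between the $\mathcal U$-generic gaps $(e_\eta(n),e_\eta(n+1))$ and the $E$-sets $\Gamma_n(\cdot)$ should amalgamate, in $\mathbf V$, into a permutation of $\omega$ carrying $E$ onto $\mathcal U$, contrary to hypothesis. Making this amalgamation precise, and checking it really does violate $E\not\cong\mathcal U$, is the crux; the remainder is routine bookkeeping with pure decision, the Ramsey-ultrafilter fusion lemma, and the Laver property.
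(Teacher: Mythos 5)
Your opening moves track the paper's quite closely: you use the Laver property of $\undertilde{R}$ over the intermediate model to reflect the hypothetical $Q$-point witness down to an $\mathbb{M}_{\mathcal{U}}$-name for a set $Z$ that is sparse relative to the gaps of the Mathias real $\eta$ (the paper applies the $Q$-point property of $E'$ to the interval partition $\{(\eta(n-1),\eta(n)]\colon n\in\omega\}$ rather than rapidity to the enumeration of $\eta$, but that difference is cosmetic), and you then propose to read $Z$ continuously along a $\mathcal{U}$-branch by pure decision and a Ramsey fusion, exactly as in the paper's Lemma~\ref{lemma1}. The reduction of the proposition to your density claim is also sound.

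However, the claim you yourself flag as the crux is precisely where the entire difficulty of the proposition lives, and your sketch of it would not go through as stated. The set $W(s,C')$ is a union over continuum many branches; each gap $(e_\eta(n),e_\eta(n+1))$ contributes up to $2^{n}(n+1)$ candidate points of $Z$ rather than one, and those candidates depend on the whole finite stem rather than on the single gap, so there is no single-valued matching to amalgamate into a permutation, and one should not expect to drive all of $W(s,C')$ out of $E$ by thinning $C'$. The paper needs three further ideas to close this step, none of which is present or implicit in your sketch: (1) a second fusion (Lemma~\ref{lemma2}) stabilizing $C_{t\cup\{n\}}\cap m^{+}$ so that it does not depend on $n$, together with a growth condition on $x_2$ (Fact~\ref{speedfact}) that bounds the number of candidates below the previous stem element; (2) a filtration $N_0\subseteq N_1\subseteq\cdots$ of a countable elementary submodel yielding an interval partition $[k_i,k_{i+1})$ with respect to which every point of $\undertilde{C}$ is forced to lie within one interval of a point of $\operatorname{range}(\underaccent{\dot}{\eta})$ (Lemma~\ref{lemma3}) --- this localization replaces your attempt to make $W(s,C')$ miss a set in $E$ outright; and (3) a purely combinatorial lemma (Lemma~\ref{lemmaiso}) producing, for any interval partition and any two non-isomorphic $Q$-points, sets $v\in E$ and $u\in\mathcal{U}$ such that $v$ misses every interval within distance one of an interval met by $u$; non-isomorphism enters only there, via order-preserving bijections between selectors and a mod-$3$ splitting, and since $\operatorname{range}(\eta)\subseteq^{*}u$ this $v$ is almost disjoint from $\operatorname{range}(\undertilde{a})$. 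Without some substitute for this machinery, your central claim remains unproved, so the proposal has a genuine gap at its key step.
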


\begin{proof}
It suffices to show that if $\langle p, \undertilde{q}\rangle \in \mathbb{M}_{\mathcal{U}} \ast \undertilde{R}$ and a $\mathbb{M}_{\mathcal{U}} \ast \undertilde{R}$-name $\undertilde{a}$ for a strictly increasing element of ${^\omega}\omega$ are such that
\[\langle p, \undertilde{q}\rangle \Vdash_{\mathbb{M}_{\mathcal{U}} \ast \undertilde{R}} \forall n \in \omega: \undertilde{a}(n) \in (\underaccent{\dot}{\eta}(n-1), \underaccent{\dot}{\eta}(n)],\]
then there exists some $v \in E$ and some $\langle \bar p, \undertilde{\bar q} \rangle \leq_{\mathbb{M}_{\mathcal{U}} \ast \undertilde{R}} \langle p, \undertilde{q}\rangle $ such that
\[\langle \bar p, \undertilde{\bar q}\rangle \Vdash_{\mathbb{M}_{\mathcal{U}} \ast \undertilde{R}} |\text{range}(\undertilde{a}) \cap v| < \omega.\]
Recall that $\underaccent{\dot}{\eta}$ is the canonical $\mathbb{M}_{\mathcal{U}}$-name for the Mathias real (assume $\mathbb{M}_{\mathcal{U}}\Vdash \underaccent{\dot}{\eta}(-1)=-\infty$).

Note that $\undertilde{a}$ is forced by $\mathbb{M}_{\mathcal{U}}$ to be dominated by $\underaccent{\dot}{\eta}$. Hence, by the Laver property of $\undertilde{R}$, there exists a $\mathbb{M}_{\mathcal{U}}$-name $\undertilde{c}$ for a function from $\omega$ to $[\omega]^{<\omega}$ and some $\langle p', \undertilde{q}'\rangle \leq_{\mathbb{M}_{\mathcal{U}} \ast \undertilde{R}} \langle p, \undertilde{q}\rangle$ such that
\[\langle p', \undertilde{q}'\rangle \Vdash_{\mathbb{M}_{\mathcal{U}} \ast \undertilde{R}} \forall n \in \omega: \undertilde{a}(n) \in \undertilde{c}(n) \text{ and } |\undertilde{c}(n)|\leq 2^n.\]

We may assume without loss of generality that $p' \Vdash_{\mathbb{M}_{\mathcal{U}}} \forall n \in \omega: \undertilde{c}(n)\subseteq (\underaccent{\dot}{\eta}(n-1), \underaccent{\dot}{\eta}(n)]$. Let $\undertilde{C}$ be a $\mathbb{M}_{\mathcal{U}}$-name for an element of $[\omega]^\omega$ such that $p' \Vdash_{\mathbb{M}_{\mathcal{U}}} \undertilde{C}= \bigcup\text{range}(\undertilde{c})$. Hence, we have
\[\langle p', \undertilde{q}'\rangle \Vdash_{\mathbb{M}_{\mathcal{U}} \ast \undertilde{R}} \forall n \in \omega: \undertilde{a}(n) \in \undertilde{C} \cap (\underaccent{\dot}{\eta}(n-1), \underaccent{\dot}{\eta}(n)] \text{ and } | \undertilde{C} \cap (\underaccent{\dot}{\eta}(n-1), \underaccent{\dot}{\eta}(n)]|\leq 2^n.\]

\begin{lem}\label{lemma1}
	Write $p'=\langle s, x_0\rangle$. There exists $x_1 \in [x_0]^{\omega}\cap \mathcal{U}$ such that the $\mathbb{M}_{\mathcal{U}}$-condition $\langle s, x_1\rangle \leq_{\mathbb{M}_{\mathcal{U}}} \langle s, x_0\rangle$ has the following property:
	
	For every $t \in [x_1]^{<\omega}$, there exists $C_t \in [\omega]^{<\omega}$ such that
	\[\langle s \cup t, x_1\setminus (\max t)^{+}\rangle \Vdash_{\mathbb{M}_{\mathcal{U}}} \undertilde{C}\cap (\max t)^{+} =C_t.\]
\end{lem}

\begin{proof}
We define a strategy for the Maiden in the ultrafilter game for $\mathcal{U}$, which will not be a winning strategy since $\mathcal{U}$ is a Ramsey ultrafilter. 

Since $\mathbb{M}_{\mathcal{U}}$ has pure decision, there exists $C_{\emptyset} \subseteq (\max s)^{+}$ and $y_0 \in [x_0]^{\omega}\cap \mathcal{U}$ such that $\langle s, y_0\rangle \Vdash_{\mathbb{M}_{\mathcal{U}}} \undertilde{C} \cap (\max s)^{+} = C_{\emptyset}$. The Maiden starts by playing $y_0$.

Assume $y_0 \supseteq y_1 \supseteq ... \supseteq y_k$ and $n_0 < n_1 < ... < n_k$ have been played, where $\forall i \leq k: y_i \in \mathcal{U}$ and $n_i \in y_i$. Again by pure decision, for each $t \subseteq \{n_0, n_1,..., n_k\}$ with $\max t=n_k$, there exists $z_t \in [y_k \setminus n_k^{+}]^{\omega}\cap \mathcal{U}$ and $C_t \subseteq n_k^{+}$ such that $\langle s \cup t, z_t\rangle \Vdash_{\mathbb{M}_{\mathcal{U}}} \undertilde{C}\cap (n_k)^{+}=C_t$. The Maiden plays
\[y_{k+1}:=\bigcap_{\substack{{t \subseteq \{n_i: i \leq k\}}\\{\max t =n_k}}} z_t.\]

Since Death wins, we have that $x_1:=\{n_i: i \in \omega\} \in \mathcal{U}$. It is easy to check that this $x_1$ satisfies the lemma.
\end{proof}

The following lemma strengthens the previous one.

\begin{lem}\label{lemma2}
Assume $\langle s, x_1\rangle$ is as in the conclusion of the previous lemma. There exists $x_2 \in [x_1]^{\omega}\cap \mathcal{U}$ such that $\langle s, x_2\rangle$ has the following property:

	For every $t \in [x_2]^{<\omega}$, every $m \in x_2\setminus \max t$ and all $n, n' \in x_2 \setminus m^{+}$, it holds that $C_{t \cup \{n\}} \cap m^{+} = C_{t \cup \{n'\}} \cap m^{+}$.
\end{lem}

\begin{proof}
We again prove this by playing the ultrafilter game for $\mathcal{U}$. Assume $y_0:=x_1 \supseteq y_1 \supseteq ... \supseteq y_k$ and $n_0 < n_1 < ... < n_k$ have been played. For every $t \subseteq \{n_0, n_1, ..., n_k\}$ and every $d \subseteq n_k^{+}$ consider the set 
\[P_{t,d}:=\{n \in y_k \setminus n_k^{+}: C_{t \cup \{n\}} \cap n_k^{+}=d\}.\]

Note that for every $t \subseteq \{n_0, n_1, ..., n_k\}$, the set $\{P_{t,d}: d \subseteq n_k^{+}\}$ is a partition of $y_k \setminus n_k^{+}$ into finitely many pieces. Hence, there exists one $d_t\subseteq n_k^{+}$ such that $P_{t,d_t}\in \mathcal{U}$.

The Maiden plays 
\[y_{k+1}:=\bigcap_{t \subseteq \{n_i: i \leq k\}} P_{t,d_t}.\]

Death will win and hence $x_2:=\{n_i: i \in \omega\} \in \mathcal{U}$. It is again not hard to check that $x_2$ satisfies the lemma.
\end{proof}

The following fact will be needed later.

\begin{fct}\label{speedfact}
Without loss of generality, we may assume that for all $n \in \{\max s\} \cup x_2$, if $n$ is the $j$'th element of $s \cup x_2$ in increasing order, then $n > 2^{j+1}$.
\end{fct}

\begin{proof}
Note that the conclusion of Lemmata \ref{lemma1} and \ref{lemma2} also holds for each $\langle s', x' \rangle \leq_{\mathbb{M}_{\mathcal{U}}} \langle s, x_2\rangle$. Hence, we simply trim $x_2$ such that the enumeration of $s \cup x_2$ dominates $2^{j+1}$ above $|s|$ and replace $s$ with $s \cup \{\min x_2\}$ and $x_2$ with $x_2 \setminus \{\min x_2\}$.
\end{proof}

Next, let $N$ be a countable elementary submodel of some large enough $\mathcal{H}_{\chi}$ such that $\{\mathcal{U}, \mathbb{M}_{\mathcal{U}}, \undertilde{C}, \langle s, x_2\rangle\} \in N$. By induction, construct a sequence $N_0 \subseteq N_1 \subseteq ... $ of finite subsets of $N$ such that

\begin{itemize}
\item[(i)] $\{\mathcal{U}, \mathbb{M}_{\mathcal{U}}, \undertilde{C}, \langle s, x_2\rangle, s, x_2\} \subseteq N_0$,
\item[(ii)] $\bigcup_{i \in \omega}N_i =N	$,
\item[(iii)] $\forall i \in \omega: k_i:=N_i \cap \omega \in \omega$.
\item[(iv)] $\forall i \in \omega: \forall t \in [\omega]^{<\omega}: t \in N_i \iff t \subseteq N_i$,
\item[(v)] If $\langle m,l,D\rangle \in (\omega \times \omega \times [\omega]^{<\omega})\cap N_i$, then $m,l,D \in N_i$ (and hence $D \subseteq N_i$ by the previous condition).
\item[(vi)] $\forall i \in \omega:$ If $\phi(x, a_0, ...,a_l)$ is a formula of length less than $2025$ with $a_0, ..., a_l \in N_i$ and $N \models \exists x \phi(x, a_0, ...,a_l)$, then there exists $b \in N_{i+1}$ such that $N \models \phi(b, a_0, ..., a_l)$.
\end{itemize}

\begin{lem}\label{lemma3}
	$\langle s, x_2\rangle$ forces that
	\[\forall i \in \omega\setminus \{0,1\}: \undertilde{C} \setminus (\max s)^{+}\cap [k_{i-1}, k_{i})\neq \emptyset \implies
	\begin{cases}
	\text{range}(\underaccent{\dot}{\eta})\cap [k_{i-2}, k_{i-1})\neq \emptyset, \text{ or}	\\
	\text{range}(\underaccent{\dot}{\eta})\cap [k_{i-1}, k_i)\neq \emptyset, \text{ or}	\\
	\text{range}(\underaccent{\dot}{\eta})\cap [k_{i}, k_{i+1})\neq \emptyset.
	\end{cases}
\]
\end{lem}

\begin{proof}
Assume $\langle s \cup t, x'\rangle\leq_{\mathbb{M}_{\mathcal{U}}} \langle s, x_2\rangle$, $a \in \omega \setminus (\max s)^{+}$ and $i \in \omega \setminus \{0,1\}$ are such that
\[\langle s \cup t, x'\rangle \Vdash_{\mathbb{M}_{\mathcal{U}}} a \in \undertilde{C} \setminus (\max s)^{+}\cap [k_{i-1}, k_{i}).\]
We show that $\langle s \cup t, x'\rangle$ forces one of the three possible conclusions in the statement of the lemma.

By possibly extending $t$, we may assume that $t$ contains at least one element that is greater than $a$. Let $l_0:=\max(t \cap a)$ and $l^{*}:= \min (t \setminus a)$. Furthermore, let $m^{*}:=\max (x_2 \cap l^{*})$. Hence, $l_0$ and $l^{*}$ are consecutive elements of $t$ and $l_0 \leq m^{*} < l^{*}$ and $l_0 < a \leq l^{*}$. We distinguish between two cases:

\paragraph{Case I.} Assume $l_0 \leq m^{*} \leq a \leq l^{*}$.

If  $l^{*}\in [k_{i-1}, k_{i})$, we are done, since this means that $\langle s \cup t, x'\rangle \Vdash_{\mathbb{M}_{\mathcal{U}}} l^{*} \in \text{range}(\underaccent{\dot}{\eta}) \cap [k_{i-1}, k_{i})$. Hence, assume $l^{*} \notin [k_{i-1}, k_{i})$, i.e., $l^{*} \notin N_i$. Note that $l^{*}$ witnesses that
\[N \models \exists l: l=\min(x_2 \setminus a).\]
Hence, by (vi), we have that $l^{*} \in N_{i+1}$ and thus $l^{*} \in [k_{i}, k_{i+1})$.

\paragraph{Case II.} Assume $l_0 < a < m^{*} < l^{*}$.

Let $t':=t\cap a$, i.e., $l_0:=\max t'$, and let $i^{*} \in \omega \setminus \{0\}$ be such that $l_0 \in [k_{i^{*}-1}, k_{i^{*}})$, i.e., $l_0$ first appears in $N_{i^{*}}$. If $i^{*}=i$, we are again done, hence assume that $a \notin N_{i^{*}}$. We will show that $i^{*}=i-1$.

Let $j \in \omega$ be such that $l^{*}$ is the $j$'th elements of $s \cup t$ in increasing order. By Lemmata \ref{lemma1} and \ref{lemma2}, there is $C_{t' \cup \{l^{*}\}} \subseteq (l^{*})^{+}$ such that
\[\langle s \cup t' \cup \{l^{*}\}, x_2 \setminus (l^{*})^{+}\rangle \Vdash_{\mathbb{M}_{\mathcal{U}}} \undertilde{C}\cap (l^{*})^{+}= C_{t' \cup \{l^{*}\}}.\]

Set $D^{*}:= C_{t' \cup \{l^{*}\}} \cap (l_0, m^{*})$. Since 
\[\langle s \cup t' \cup \{l^{*}\}, x_2 \setminus (l^{*})^{+}\rangle \geq_{\mathbb{M}_{\mathcal{U}}} \langle s \cup t, x'\rangle,\]
and since $l_0 < a < m^{*}$ by assumption, we must have $a \in D^{*}$. Furthermore, note that $D^{*}\subseteq C_{t' \cup \{l^{*}\}} \cap (l_0, l^{*}]$ and thus $|D^{*}|=:\gamma \leq 2^{j}$.

Now, $m^{*}$, $l^{*}$ and $D^{*}$ witness that
\[N \models \exists \langle m,l, D\rangle: \begin{cases}
 	 m,l \in x_2 \setminus {l_0}^{+}, m<l, \text{ and} \\
 	  D \subseteq (l_0, m), \text{ and } \\
 	  |D|=\gamma, \text{ and } \\
 	  \langle s \cup t' \cup \{l\}, x_2 \setminus l^{+}\rangle \Vdash_{\mathbb{M}_{\mathcal{U}}} \undertilde{C} \cap (l_0, m)=D.
 \end{cases}
 \]
 Since $l_0$ is the $(j-1)$'th element of $s \cup t'$, we have $l_0 > 2^{j}$ by Fact \ref{speedfact}.\footnote{Note that the additional requirement in Fact \ref{speedfact} that $\max s$ is already larger than $2^{|s|}$ is needed here, since $l_0$ could be $\max s$.} Hence, since $l_0 \in N_{i^{*}}$, it follows that $\gamma\in N_{i^{*}}$. Thus, all the parameters in the above formula lie in $N_{i^{*}}$, which implies that there exists $\langle m^{\dagger}, l^{\dagger}, D^{\dagger}\rangle \in N_{i^{*}+1}$ satisfying the formula.
 
\begin{claim}
$l^{\dagger}\geq a$	
\end{claim}

Note that the proof of this claim will finish the proof of the Lemma, since $l^{\dagger} \in N_{i^{*}+1}$ by (v) and thus $a \in N_{i^{*}+1}\setminus N_{i^{*}}$.

\noindent{\it Proof of Claim.}
Assume by contradiction that $l^{\dagger} < a$, i.e.,
\[l_0 <m^{\dagger}< l^{\dagger} < a < m^{*} < l^{*}.\]
By Lemma \ref{lemma2}, we have that
\[C_{t' \cup \{l^{\dagger}\}} \cap (m^{\dagger})=C_{t' \cup \{l^{*}\}}\cap (m^{\dagger}).\]
Since $\langle s \cup t' \cup \{l^{\dagger}\}, x_2 \setminus (l^{\dagger})^{+}\rangle \Vdash_{\mathbb{M}_{\mathcal{U}}} \undertilde{C} \cap (l_0, m^{\dagger})=D^{\dagger}$, it follows that $C_{t' \cup \{l^{*}\}}\cap (m^{\dagger})= D^{\dagger}$ and hence $D^{\dagger}=D^{*}\cap (l_0, m^{\dagger})$. However, both $D^{\dagger}$ and $D^{*}$ have size $\gamma$ and thus $D^{*}\subseteq (l_0, m^{\dagger})$, which is a contradiction to the fact that $a \in D^{*}$ and $a > m^{\dagger}$.
This completes the proof of the Claim and Lemma\;\ref{lemma3} as well.
\end{proof}

We now only need one final lemma to finish the proof of the proposition 
and thus of the {\sc Main Theorem}.

\begin{lem}\label{lemmaiso}
Let $I:=\{[k_{i}, k_{i+1}): i \in \omega\}$ be any interval partition of $\omega$ and $E$ and $\mathcal{U}$ non-isomorphic $Q$-points. Then there exist $v \in E$ and $u \in \mathcal{U}$ such that
\[\forall i \in \omega \setminus \{0\}: v \cap [k_{i}, k_{i+1})\neq \emptyset \implies 	\begin{cases}
	u\cap [k_{i-1}, k_{i})= \emptyset, \text{ and}	\\
	u\cap [k_{i}, k_{i+1})= \emptyset, \text{ and}	\\
	u\cap [k_{i+1}, k_{i+2})= \emptyset.
	\end{cases}
\]
\end{lem}

\begin{proof}
Say that a $Q$-point element \textit{selects} from an interval partition if it intersects each interval in exactly one point. Let $v_0 \in E$ and $u_0 \in \mathcal{U}$ be such that they select from $I$. Let $f$ be an order-preserving bijection from $v_0$ to $u_0$, extended to a permutation of $\omega$. Thus, for each $i \in \omega$, $f$ sends the element selected by $v_0$ in $[k_{i}, k_{i+1})$ to the element selected by $u_0$ in $[k_{i}, k_{i+1})$. Since $E$ and $\mathcal{U}$ are non-isomorphic, there exist $v_1 \in [v_0]^\omega \cap E$ and $u_1 \in [u_0]^\omega \cap \mathcal{U}$ such that $u_1 \cap f[v_1] = \emptyset$. Hence, for all $i \in \omega \setminus \{0\}$:
\[v_1 \cap [k_{i}, k_{i+1})\neq \emptyset \implies u_1 \cap [k_{i}, k_{i+1})= \emptyset.\]
Both $E$ and $\mathcal{U}$ contain the set
\[y_{\varepsilon}:=\bigcup_{\substack{{i \in \omega}\\{i \equiv \varepsilon \; (\text{mod}\; 3)}}} [k_{i}, k_{i+1}),\]
each for exactly one $\varepsilon=\varepsilon(E), \varepsilon({\mathcal{U}}) \in 3$. Let $v_2:=v_1 \cap y_{\varepsilon(E)} \in E$ and $u_2:=u_1 \cap y_{\varepsilon(\mathcal{U})} \in \mathcal{U}$. If $\varepsilon(E)=\varepsilon(\mathcal{U})$ then $v_2$ and $u_2$ satisfy the lemma, hence assume without loss of generality that $\varepsilon(E)=0$ and $\varepsilon(\mathcal{U})=1$.

Let $\bar v_0 \in E$ and $\bar u_0 \in \mathcal{U}$ be elements that select from the interval partition
\[\{[k_{i}, k_{i+2}): i \in \omega, i \equiv 0 \;(\text{mod}\;3)\} \cup \{[k_{i}, k_{i+1}): i \in \omega, i \equiv 2 \;(\text{mod}\;3)\}.\]
Again, by considering a permutation of $\omega$ that maps the element selected by $\bar v_0$ in any interval to the element selected by $\bar u_0$ in the same interval, we find $\bar v_1 \in [\bar v_0]^\omega \cap E$ and $\bar u_1 \in [\bar u_0]^\omega \cap \mathcal{U}$ such that $\bar v_1$ and $\bar u_1$ never select from the same interval. Now, clearly, $v_1 \cap \bar v_1 \in E$ and $u_1 \cap \bar u_1 \in \mathcal{U}$ work.
\end{proof}

We can now finish the proof of the proposition and hence of the main theorem: Let $v \in E$, $u \in \mathcal{U}$ be given by the previous lemma for the interval partition $\{[k_{i}, k_{i+1}): i \in \omega\}\cup \{[0, k_0)\}$ constructed in the proof of Lemma~\ref{lemma3}. Let $G \ast H$ be any $\mathbb{M}_{\mathcal{U}}\ast \undertilde{R}$-generic filter containing $\langle \langle s, x_2 \rangle, \undertilde{q}'\rangle$. By Lemma~\ref{lemma3}, we have that in $\mathbf{V}[G \ast H]$, whenever $\text{range}(\undertilde{a}[G \ast H]) \setminus (\max s)^{+}$ intersects one of the intervals $[k_{i}, k_{i+1})$, then the Mathias real $\eta$ intersects $[k_{i}, k_{i+1})$ or one of the adjacent intervals $[k_{i-1}, k_{i})$ or $[k_{i+1}, k_{i+2})$. Since $\text{range}(\eta)$ is almost contained in $u$, the same is true for $u$ in place of $\eta$ above some $n \geq (\max s)^{+}$. Hence, $\text{range}(\undertilde{a}[G \ast H]) \setminus n$ is disjoint from $v$.
\end{proof}

\let\OLDthebibliography\thebibliography
\renewcommand\thebibliography[1]{
  \OLDthebibliography{#1}
  \setlength{\parskip}{0pt}
  \setlength{\itemsep}{5pt plus 0.3ex}
}

\end{document}